\newtheorem{theorem}{Theorem}[section]
\newtheorem{lemma}[theorem]{Lemma}
\theoremstyle{definition}
\newtheorem{definition}[theorem]{Definition}
\theoremstyle{remark}
\numberwithin{equation}{section}
\newcommand{\eps}{\varepsilon}
\def\rep{{\rm rep}}
\def\Rep{{\rm Rep}}
\def\rme{{\rm e}}
\begin{document}

\title[On the $b$-ary expansions of $\log (1 + \frac{1}{a})$ and $\rme$]{On 
the $b$-ary expansions of $\log (1 + \frac{1}{a})$ and $\rme$}

\author{Yann Bugeaud}
\address{Department of Mathematics, Universit\'e de Strasbourg, 7 rue Ren\'e
Descartes, 67084 Strasbourg, France}
\email{bugeaud@math.unistra.fr}

\author{Dong Han Kim}
\address{Department of Mathematics Education,
Dongguk University -- Seoul, Seoul 04620, Korea.}
\email{kim2010@dongguk.edu}

\begin{abstract}
Let $b \ge 2$ be an integer and $\xi$ an irrational real number.
We prove that, if the irrationality exponent of $\xi$ is equal to $2$ or slightly greater than $2$,
then the $b$-ary expansion of $\xi$ cannot be `too simple', in a suitable sense. 
Our result applies, among other classical numbers, to 
badly approximable numbers, non-zero rational powers of $\rme$, and
$\log (1 + \frac{1}{a})$, provided that the integer $a$ is sufficiently large. It 
establishes an unexpected connection between the irrationality exponent of a 
real number and its $b$-ary expansion. 
\end{abstract}

\subjclass[2010]{11A63 (primary); 11J82, 68R15 (secondary)}

\keywords{$b$-ary expansion, combinatorics on words, irrationality exponent, complexity}

\maketitle 

\section{Introduction and main result}

Throughout the present paper, $b$ always denotes an integer 
greater than or equal to $2$
and $\xi$ a real number. There exists a unique infinite sequence 
$(a_j)_{j \ge 1}$ of integers from $\{0, 1, \ldots, b-1\}$, called the
$b$-ary expansion of $\xi$, such that
$$
\xi = \lfloor \xi \rfloor + \sum_{j\ge 1} \, \frac{a_j}{b^j}    \eqno (1.1)
$$
and $a_j \not= b-1$ for infinitely many indices $j$. 
Here, $\lfloor \cdot \rfloor$ denotes the integer part function. 
Clearly, the sequence $(a_j)_{j \ge 1}$ is ultimately periodic if, and only if, $\xi$ is rational.

The real number $\xi$ is called {\it normal to base $b$} if, for any positive integer $k$,
each one of the $b^k$ blocks of $k$ digits from $\{0, 1, \ldots, b-1\}$ 
occurs in the $b$-ary expansion $a_1 a_2 \ldots $ of $\xi$ with the same frequency $1/b^k$.
The first explicit example of a number normal to base $10$, namely the number
$$
0.1234567891011121314\ldots,   
$$
whose sequence of digits is the concatenation of all
positive integers ranged in increasing order, 
was given in 1933 by Champernowne \cite{Ch33}; see the monograph \cite{BuLiv2}
for further results.  Almost all real
numbers (here and below, `almost all' always refers to the Lebesgue measure) 
are normal to every
base $b$, but proving that a specific number, like $\rme$, $\pi$, $\sqrt{2}$
or $\log 2$ is normal to some base remains a challenging open 
problem, which seems to be completely out of reach.

In the present paper, we focus our
attention to apparently simpler questions. We take a point of view
from combinatorics on words. 
Let $\mathcal A$ be a finite set called an alphabet and denote by $|\mathcal A|$ its cardinality.
A word over $\mathcal A$ is a finite or infinite sequence of elements of $\mathcal A$.
For a (finite or infinite) word ${\bf x} = x_1 x_2 \ldots$ written over $\mathcal A$,
let $n \mapsto p (n, {\bf x})$ denote its subword complexity function
which counts the number of different subwords of length $n$ occurring in $\mathbf x$, that is,
$$
p (n,{\bf x}) = \# \{ x_{j+1} x_{j+2} \dots x_{j+n} : j \ge 0 \}, \quad n \ge 1.
$$
Clearly, we have
$$
1 \le p(n, {\bf x}) \le |\mathcal A|^n, \quad n \ge 1.
$$
If ${\bf x}$ is ultimately periodic, then there exists an integer $C$ such that 
$p(n, {\bf x}) \le C$ for $n \ge 1$. Otherwise, we have
$$
p(n+1, {\bf x}) \ge p(n, {\bf x}) + 1, \quad n \ge 1, \eqno (1.2)
$$
thus $p(n, {\bf x}) \ge n+1$ for $n \ge 1$. There exist uncountably many infinite words 
${\bf s}$ over $\{0, 1\}$ such that $p(n, {\bf s}) = n+1$ for $n \ge 1$. These words are called
Sturmian words. Classical references on combinatorics on words and
on Sturmian sequences include \cite{Fogg02,Loth02,AlSh03}.

A natural way to measure the
complexity of the real number $\xi$ written in base $b$ 
as in (1.1) is to count the number of
distinct blocks of given length in the infinite word ${\bf a} = a_1 a_2 \ldots$. 
Thus, for $n \ge 1$, we set $p(n, \xi, b) = p(n, {\bf a})$. 
Obviously, we have
$$
p(n, \xi, b) = \# \{a_{j+1} a_{j+2} \ldots a_{j+n} : j \ge 0\}, \quad n \ge 1, 
$$
and
$$
1 \le p(n, \xi, b) \le b^n, \quad n \ge 1, 
$$
where both inequalities are sharp. 

If $\xi$ is normal to base $b$, then $p(n, \xi, b) = b^n$ for every 
positive integer $n$. Clearly, the converse does not always hold. 
To establish a good lower bound for $p(n, \xi, b)$ is a first step towards 
the confirmation that the real number $\xi$ is normal to base $b$.
This point of view has been taken by Ferenczi and Mauduit \cite{FeMa97} in 1997. 
It follows from their approach (see also \cite{All00}) that we have
$$
\lim_{n \to + \infty} \, 
\bigl( p(n, \xi, b) - n \bigr) = + \infty,  
$$
for every algebraic irrational number $\xi$ and every integer $b \ge 2$. 
Subsequently, by means of a new transcendence
criterion established in \cite{ABL}, their result 
was improved in \cite{AdBu07d} as follows. 

\begin{theorem}\label{complalg}
For every integer $b \ge 2$, every algebraic irrational number $\xi$ satisfies   
$$
\lim_{n\to+\infty}\, \frac{p(n, \xi, b)}{n} = +\infty.
$$
\end{theorem}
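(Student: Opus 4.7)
The plan is to argue by contradiction, assuming there is a constant $K$ such that $p(n,\xi,b)\le Kn$ for infinitely many $n$, and to derive that $\xi$ would then be transcendental, contradicting the hypothesis that $\xi$ is algebraic. The overall strategy is the well-known two-step pattern: first, extract from the slow complexity growth a family of strong combinatorial repetitions in the $b$-ary expansion $\mathbf{a}=a_1a_2\ldots$; second, convert those repetitions into exceptionally good rational (or $b$-adic) approximations to $\xi$ and apply a transcendence criterion based on the Schmidt subspace theorem.

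For the combinatorial extraction, I would rely on a Rauzy-graph / pigeonhole argument in the spirit of Cassaigne and of Ferenczi--Mauduit. The assumption $p(n,\mathbf{a})\le Kn$ infinitely often implies that the first difference $p(n+1,\mathbf{a})-p(n,\mathbf{a})$ is small on average, and by inspecting the bispecial factors one produces, for each such $n$, a factorization showing that $\mathbf{a}$ contains a factor of the form $U_nV_nV_n'$, where $V_n'$ is a prefix of $V_n$, with $|V_n|\to\infty$, with $|U_n|\le c_1|V_n|$ and with $|V_n'|\ge c_2|V_n|$ for constants $c_1,c_2>0$ depending only on $K$ and $b$. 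Here the key inequality (1.2) from the excerpt, together with the linear bound, is what prevents $\mathbf{a}$ from being ``too random'' and forces these near-squares to appear infinitely often.

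Next, I would convert each factor $U_nV_nV_n'$ occurring as a prefix (after an initial shift) into a Diophantine statement. Writing $|U_n|=r_n$, $|V_n|=s_n$, $|V_n'|=t_n$, the periodic continuation $U_n V_n V_n V_n\ldots$ represents a rational $p_n/q_n$ with denominator $q_n=b^{r_n}(b^{s_n}-1)$, and the partial match $U_nV_nV_n'$ yields
$$
\Bigl|\xi-\frac{p_n}{q_n}\Bigr|\le b^{-(r_n+s_n+t_n)}=q_n^{-1-t_n/(r_n+s_n)+o(1)}.
$$
Since $r_n+s_n\le(1+c_1)s_n$ and $t_n\ge c_2 s_n$, the exponent exceeds $1+\delta$ for some fixed $\delta=\delta(K,b)>0$. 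Moreover, both $p_n/q_n$ and $\xi$ are very close to $b$-adic rationals with bounded denominators, so one also controls the $b$-adic distance $|p_n-\xi q_n|_b$.

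The final step is to feed this data into the transcendence criterion of Adamczewski--Bugeaud--Luca from \cite{ABL}: an irrational number admitting infinitely many rational approximations of the above shape, good simultaneously at the archimedean place and at the places dividing $b$, must be transcendental. The proof of that criterion is an application of the $p$-adic Schmidt subspace theorem to a carefully chosen system of four linear forms in the variables $(1,q_n,p_n,b^{r_n}p_n-b^{r_n+s_n}p_n+\ldots)$; I would simply invoke it as a black box. This contradicts the algebraicity of $\xi$ and proves the theorem. The main obstacle, in my view, is the combinatorial step: turning mere linear complexity into a uniform quantitative repetition $V_nV_n'$ with $|V_n'|\ge c_2|V_n|$ requires a careful analysis of return words / Rauzy graphs, because $p(n,\mathbf{a})/n$ being bounded in $\liminf$ is a much weaker hypothesis than, say, the existence of infinite squares, and one must squeeze out the quantitative ratio $\delta>0$ on which the subspace-theorem argument relies.
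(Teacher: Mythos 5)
The paper does not actually prove Theorem \ref{complalg}: it is quoted as the main result of \cite{AdBu07d}, which is obtained there by extracting quantitative repetitions from the assumption of linear complexity and feeding them into the transcendence criterion of \cite{ABL} built on the $p$-adic Schmidt subspace theorem, and your sketch reproduces exactly that strategy, correctly identifying the repetition $U_nV_nV_n'$ with $|U_n|\le c_1|V_n|$ and $|V_n'|\ge c_2|V_n|$ as the crux and the good simultaneous archimedean and $b$-adic approximation by $p_n/(b^{r_n}(b^{s_n}-1))$ as the Diophantine input. The only remark worth adding is that the combinatorial step you flag as the main obstacle is in fact a direct pigeonhole on the $(K+1)n-n+1$ factors of length $n$ of the prefix of length $(K+1)n$ (followed by a routine regrouping of the period to normalize $|V_n|$), so no Rauzy-graph or bispecial-factor analysis is needed.
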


Much less is known for specific transcendental numbers. 
The only result available so far was obtained in \cite{Ad10} 
(see also Section 8.5 of \cite{BuLiv2}), 
as the consequence of two combinatorial statements established in
\cite{BeHoZa06} and \cite{AdBu11} on the structure of 
Sturmian words.
Before stating this result, we recall a basic notion from
Diophantine approximation.

\begin{definition}\label{defirrexp}
The irrationality exponent $\mu(\xi)$ of an irrational real number $\xi$ is the supremum
of the real numbers $\mu$ such that the inequality
$$
\biggl| \xi - \frac{p}{q} \biggr| < \frac{1}{q^{\mu}}
$$
has infinitely many solutions in rational numbers $\frac{p}{q}$.
\end{definition}

The theory of continued fraction implies that 
every irrational real number $\xi$ satisfies $\mu(\xi) \ge 2$. Combined with an
easy covering argument, we get that the irrationality exponent of almost 
every real number is equal to $2$.
Theorem 1 of \cite{Ad10}, reproduced below, extends the result of Ferenczi and 
Mauduit mentioned above to 
real numbers whose irrationality exponent is equal to $2$
(recall that, by Roth's theorem \cite{Roth55}, the irrationality exponent of 
every real algebraic irrational number is equal to $2$).

\begin{theorem}\label{complmu2}
For every integer $b \ge 2$,
every irrational real number $\xi$ whose irrationality exponent is equal to $2$ satisfies
$$
\lim_{n \to + \infty} \, \bigl( p(n, \xi, b) - n \bigr) = + \infty.
$$
\end{theorem}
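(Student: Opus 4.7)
The plan is a proof by contradiction. Suppose $p(n, \xi, b) - n$ does not tend to $+\infty$. Since $\xi$ is irrational, (1.2) shows that $(p(n, \xi, b) - n)_{n \ge 1}$ is a non-decreasing sequence of positive integers, hence under the assumption it is eventually constant: there exist $C \ge 1$ and $n_0 \ge 1$ with $p(n, \xi, b) = n + C$ for every $n \ge n_0$. The goal is to derive from this strong combinatorial constraint a sequence of rationals $p_k / q_k$ with $|\xi - p_k / q_k| < q_k^{-(2+\delta)}$ for some fixed $\delta > 0$, contradicting $\mu(\xi) = 2$.

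Writing $\mathbf a = a_1 a_2 \ldots$ for the $b$-ary expansion of $\xi$, the argument proceeds in three steps. First, by the classification of infinite words of eventual complexity $n + C$ (Cassaigne; see also \cite{BeHoZa06}), up to a finite prefix $\mathbf a$ is the image of a Sturmian word $\mathbf s$ under a non-erasing morphism, so $\mathbf a$ is \emph{quasi-Sturmian}. Second, by the structural results on Sturmian sequences from \cite{BeHoZa06}, every Sturmian word admits infinitely many prefix factorizations $U V V'$, with $V'$ a prefix of $V$, in which the ratio $|V'| / (|U| + |V|)$ is bounded below uniformly by some constant strictly greater than $1$; pushing these through the defining morphism yields, for $\mathbf a$ itself, analogous prefix factorizations $\widetilde U_k \widetilde V_k \widetilde V_k'$ in which the overlap still exceeds the period by a fixed multiplicative factor $1 + \delta$ for some $\delta > 0$.

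Third, each such factorization is converted into a rational approximation. For $u = |\widetilde U_k|$ and $v = |\widetilde V_k|$, the rational $p_k / q_k$ with denominator $q_k = b^u (b^v - 1)$ whose $b$-ary expansion extends $\widetilde U_k \widetilde V_k$ periodically agrees with $\xi$ in its first $u + v + |\widetilde V_k'|$ digits, so $|\xi - p_k / q_k| \le b \cdot b^{-u - v - |\widetilde V_k'|}$. Combined with $q_k \le b^{u+v}$ and $|\widetilde V_k'| \ge (1 + \delta)(u + v)$, this yields $|\xi - p_k / q_k| \le b \cdot q_k^{-(2+\delta)}$. Letting $k \to \infty$ gives $\mu(\xi) \ge 2 + \delta$, the desired contradiction.

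The main obstacle is the second step: one has to extract, from the purely enumerative condition $p(n, \xi, b) = n + C$, a \emph{quantitative} lower bound strictly exceeding $1$ on the overlap-over-period ratio in an infinite family of prefix factorizations of $\mathbf a$. This is exactly the combinatorial content of \cite{BeHoZa06} and \cite{AdBu11}, obtained through a careful $S$-adic analysis of Sturmian sequences and of the way this structure is preserved under non-erasing morphisms. The Cassaigne classification in Step 1 and the elementary Diophantine translation in Step 3 are then standard once this combinatorial input is available.
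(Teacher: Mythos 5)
Your outline is, in substance, the original proof of this statement due to Adamczewski \cite{Ad10}, and that is exactly how the present paper treats Theorem~\ref{complmu2}: it is quoted from the literature, not reproved along these lines. The paper's own derivation of the statement is entirely different: Theorem~\ref{complmu2} follows from the much stronger Theorem~\ref{complmain} (namely $\liminf_{n} p(n,\xi,b)/n \ge 9/8$ when $\mu(\xi)=2$), which is proved in an essentially self-contained way via the return-time function $n \mapsto r(n,\mathbf x)$, the exponents $\rep$ and $\Rep$, the combinatorial Theorem~\ref{diffReprep}, and Lemma~\ref{minmurep} linking $\rep(\mathbf a)$ to $\mu(\xi)$. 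Your route buys exactly the qualitative conclusion $p(n,\xi,b)-n\to+\infty$ at the cost of importing the $S$-adic analysis of Sturmian words from \cite{BeHoZa06} and \cite{AdBu11}; the paper's route buys a linear lower bound on $p(n,\xi,b)$, applicability to irrationality exponents slightly above $2$, and independence from the Sturmian machinery.

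One point in your write-up must be corrected before the argument closes. In Step 2 you ask for prefix factorizations $UVV'$ with $V'$ a prefix of $V$ and $|V'|/(|U|+|V|)$ bounded below by a constant greater than $1$; this is impossible, since $V'$ being a prefix of $V$ forces $|V'| \le |V| \le |U|+|V|$, so the ratio never exceeds $1$. What you actually need --- and what your own Step 3 computation implicitly uses when you let the expansion of $p_k/q_k$ extend $\widetilde U_k \widetilde V_k$ periodically --- is that $\mathbf a$ has infinitely many prefixes of the form $U V^{w}$ with $(|U| + w|V|)/(|U|+|V|) \ge 2+\delta$; equivalently, $V'$ must be allowed to be a prefix of the infinite periodic word $VVV\cdots$ of length exceeding $|U|+|V|$, which requires $w > 2 + |U|/|V|$. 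In that corrected form the assertion is precisely the lower bound on the Diophantine exponent of quasi-Sturmian words established in \cite{BeHoZa06} and \cite{AdBu11}, and the remainder of your argument (the monotonicity of $p(n,\xi,b)-n$ from (1.2), Cassaigne's classification of words of complexity $n+C$, and the Diophantine translation in Step 3) goes through.
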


Theorem \ref{complmu2}  applies to a wide class of classical numbers, 
including non-zero rational powers of $\rme$, badly approximable numbers, 
$\tan \frac{1}{a}$, where $a$ is a positive integer, etc.
Further examples of real numbers whose irrationality exponent is known
to be equal to $2$ are listed in \cite{Ad10}. 

Theorem \ref{complmu2} covers all what is
known at present on the $b$-ary expansion of transcendental numbers. 
The main result of the present paper is the following considerable
improvement of Theorem \ref{complmu2}.

\begin{theorem}\label{complmain}
Let $b \ge 2$ be an integer and $\xi$ an irrational real number.
If $\mu$ denotes the irrationality exponent of $\xi$, then
$$
\liminf_{n \to + \infty} \, \frac{p(n, \xi, b)}{n} \ge 
1 +  \frac{1 - 2 \mu (\mu - 1) (\mu - 2)}{\mu^3 (\mu - 1)}.    \eqno (1.3)
$$
and
$$
\limsup_{n \to + \infty} \, \frac{p(n, \xi, b)}{n}  
\ge  1  +  \frac{1 - 2 \mu (\mu - 1) (\mu - 2)}{3 \mu^3 - 6 \mu^2 + 4 \mu - 1}. \eqno (1.4)
$$
In particular, every irrational real number $\xi$ whose irrationality exponent
is equal to $2$ satisfies
$$
\liminf_{n \to + \infty} \, \frac{p(n, \xi, b)}{n}  \ge \frac{9}{8} 
\quad \hbox{and} \quad  
\limsup_{n \to + \infty} \, \frac{p(n, \xi, b)}{n}  \ge \frac{8}{7}.    \eqno (1.5) 
$$
\end{theorem}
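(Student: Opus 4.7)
I would argue by contradiction. Suppose that (1.3) (respectively (1.4)) fails; then for infinitely many (resp. arbitrarily large) integers $n$ the quotient $p(n,\xi,b)/n$ lies strictly below the threshold $c(\mu)$ appearing on the right-hand side. My plan is to translate this low-complexity information into the existence of long repetitions in the prefix of the $b$-ary word $\mathbf a = a_1 a_2 \ldots$ associated with $\xi$, to convert each such repetition into a very good rational approximation to $\xi$, and finally to show that the quality of those approximations forces the irrationality exponent of $\xi$ to exceed $\mu$, contradicting the definition of $\mu(\xi) = \mu$. The strategy is therefore a quantitative refinement of the Ferenczi--Mauduit / Adamczewski--Bugeaud approach that was used to obtain Theorems \ref{complalg} and \ref{complmu2}.

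\textbf{Combinatorial step.} The heart of the argument is a quantitative ``repetition lemma'' for low-complexity words. Concretely, I need a statement of the form: if a prefix $W$ of $\mathbf a$ of length $N$ satisfies $p(N, \mathbf a) \le (1+c)N$, then $W$ admits a factorization $W = U V^{1+\rho} U'$ with $|U| = r$, $|V| = s$, and $\rho > 0$, together with explicit inequalities linking $r$, $s$, $\rho$ to $c$. One such inequality suffices for Theorem \ref{complmu2}; here I need sharper control on both $s/(r+s)$ and $\rho$. I would build this either on the structural statements of \cite{BeHoZa06} and \cite{AdBu11} about Sturmian-like factors, or directly by iterating the Morse--Hedlund/Cassaigne inequality $p(n+1, \mathbf a) - p(n, \mathbf a) \ge 1$ and examining the indices where equality almost holds, since such indices correspond via the Rauzy-graph formalism to bispecial factors forcing repetitions.

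\textbf{Diophantine step.} Any factorization of a prefix of $\mathbf a$ of the shape $UV^{1+\rho}$ with $|U| = r$ and $|V| = s$ produces a rational
$$
\frac{p}{q} = \lfloor \xi \rfloor + \sum_{j=1}^{r} \frac{a_j}{b^j} + \frac{1}{b^{r}(b^{s}-1)} \sum_{j=1}^{s} a_{r+j}\, b^{s-j},
$$
whose denominator divides $b^{r}(b^{s}-1)$, so $q \le b^{r+s}$. Since $\mathbf a$ and the eventually periodic expansion $U V V V \ldots$ coincide on at least the first $r + (1+\rho)s$ digits,
$$
\Bigl| \xi - \frac{p}{q} \Bigr| < b^{-r-(1+\rho)s} \le q^{-\mu'}, \qquad \mu' := \frac{r+(1+\rho)s}{r+s}.
$$
Producing such an approximation for infinitely many $N$ therefore forces $\mu(\xi) \ge \mu'$. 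Optimizing $\mu'$ over the admissible triples $(r, s, \rho)$ supplied by the combinatorial step should recover precisely the bounds (1.3) and (1.4). The gap between the liminf and the limsup inequalities reflects how often the complexity estimate is available: to contradict (1.3) one only needs infinitely many low-complexity scales $n$, leaving more freedom to pick the three parameters, whereas to contradict (1.4) one is restricted to the sparser subsequence where $p(n,\xi,b)/n$ stays bounded above, and a less favourable Diophantine exponent results.

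\textbf{Main obstacle.} The genuine difficulty is the combinatorial extraction of a repetition whose three parameters $r$, $s$, $\rho$ are \emph{simultaneously} controlled in the correct way; a naive pigeonhole on factors yields a repetition but loses the coupling needed to produce the cubic-in-$\mu$ numerator $1 - 2\mu(\mu-1)(\mu-2)$ that appears in (1.3) and (1.4). I expect that the cleanest route is a careful analysis of the Rauzy graphs of $\mathbf a$, tracking the bispecial factors responsible for each unit increment of $p(n,\xi,b)$, and then optimizing the resulting Diophantine exponent; the two different denominators in (1.3) and (1.4) ought to emerge from two different minimax problems, one over the set of ``low-complexity scales'' and the other over its subsequences.
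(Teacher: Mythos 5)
Your overall architecture is sound and your Diophantine step is essentially the paper's Lemma \ref{minmurep}: an occurrence of $W(UV)^{t+1}U$ as a prefix yields a rational with denominator dividing $b^{|W|}(b^{|UV|}-1)$ and approximation exponent $\frac{r+(1+\rho)s}{r+s}$, exactly as you write. The genuine gap is the combinatorial step, which you correctly flag as the main obstacle but do not supply, and the substitutes you propose would not close it. The paper does not extract a single factorization $UV^{1+\rho}U'$ from a low-complexity prefix and optimize over $(r,s,\rho)$. Instead it routes everything through the return-time function $r(n,\mathbf x)$ (the length of the shortest prefix containing two occurrences of some length-$n$ factor) and its exponents $\rep$ and $\Rep$. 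The chain is: Lemma \ref{minmurep} gives $\rep(\mathbf a)\ge \mu/(\mu-1)$; the purely combinatorial Theorem \ref{diffReprep} then gives $\liminf p(n,\mathbf a)/n \ge \rep(\mathbf a)-1+\rep(\mathbf a)^{-3}$ and $\Rep(\mathbf a)\ge \rep(\mathbf a)+\frac{1}{1+\rep(\mathbf a)+\rep(\mathbf a)^2}$, and Lemma \ref{ubound} converts the latter into the limsup bound (1.4). The proof of Theorem \ref{diffReprep} is the heart of the matter: one works at a scale $n$ where $r(n+1,\mathbf x)\ge r(n,\mathbf x)+2$, produces \emph{three} occurrences of an auxiliary word $V_n$ inside $x_1^{\alpha n}$, shows via the lower bounds on $r(v_n,\mathbf x)$ and $r(w_n,\mathbf x)$ that these occurrences must pairwise overlap, and then invokes the Fine--Wilf periodicity theorems (Theorems 1.5.2 and 1.5.3 of \cite{AlSh03}) to force two letters to coincide that were chosen to differ. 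Nothing in your sketch produces the coupling between two consecutive scales $n$, $n+1$ or the third occurrence, and these are precisely what generate the correction terms $\rep^{-3}$ and $\frac{1}{1+\rep+\rep^2}$ behind the numerator $1-2\mu(\mu-1)(\mu-2)$.

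Moreover, the specific toolkit you point to is provably insufficient for the stated bounds. The structural results of \cite{BeHoZa06} and \cite{AdBu11} on Sturmian-like factors are what yield Theorem \ref{complmu2}, i.e.\ $p(n,\xi,b)-n\to+\infty$; but Aberkane \cite{Abe03} constructed words with $p(n,\mathbf x)-n\to+\infty$ and $p(n,\mathbf x)/n\to 1$, so that conclusion cannot be bootstrapped to any $\liminf p(n)/n>1$. Likewise, a naive pigeonhole at a single scale only gives $r(n,\mathbf x)\le p(n,\mathbf x)+n$, which combined with the Diophantine step yields $\liminf p(n)/n\ge \frac{1}{\mu-1}$, i.e.\ nothing for $\mu=2$. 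A quantitative input of the strength of Theorem \ref{diffReprep} (or an executed Rauzy-graph analysis delivering comparable constants, which you have not carried out) is indispensable, so as it stands the proposal does not prove (1.3)--(1.5).
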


We display an immediate consequence of Theorem \ref{complmain}.

\begin{theorem}
For any integer $b \ge 2$ we have
$$
\liminf_{n \to + \infty} \, \frac{p(n, \rme, b)}{n}  \ge \frac{9}{8} 
\quad \hbox{and} \quad  
\limsup_{n \to + \infty} \, \frac{p(n, \rme, b)}{n}  \ge \frac{8}{7}.    
$$
\end{theorem}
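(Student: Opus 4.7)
The plan is to apply Theorem~\ref{complmain} directly with $\mu = 2$, once we have verified that the irrationality exponent of $\rme$ equals~$2$. Since Theorem~\ref{complmain} provides the $\liminf$ and $\limsup$ lower bounds as explicit rational functions of $\mu$, everything reduces to one classical Diophantine input for $\rme$ and a plug-in computation.

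The first step is the classical fact $\mu(\rme) = 2$. It is already recorded in the paragraph following Theorem~\ref{complmu2}, where $\rme$ appears among the list of numbers to which Theorem~\ref{complmu2} applies. It can be seen directly from Euler's continued fraction expansion
\[
\rme = [2;1,2,1,1,4,1,1,6,1,1,8,\ldots],
\]
whose partial quotients grow only linearly in the index. Combining the recurrence $q_{n+1} = a_{n+1} q_n + q_{n-1}$ for the convergents with the standard bound $|\rme - p_n/q_n| > 1/(q_n(q_n + q_{n+1}))$ then yields $|\rme - p_n/q_n| \gg 1/(n\, q_n^2)$, which forces $\mu(\rme) \le 2$; since any irrational number satisfies $\mu \ge 2$, equality holds.

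The second and only computational step is to substitute $\mu = 2$ into (1.3) and (1.4). The numerator $1 - 2\mu(\mu - 1)(\mu - 2)$ reduces to $1$ thanks to the factor $(\mu - 2)$. The denominator $\mu^3(\mu - 1)$ in (1.3) equals $8$, giving the lower bound $1 + \tfrac{1}{8} = \tfrac{9}{8}$. Similarly, $3\mu^3 - 6\mu^2 + 4\mu - 1 = 24 - 24 + 8 - 1 = 7$ in (1.4), giving $1 + \tfrac{1}{7} = \tfrac{8}{7}$. These are exactly the bounds stated in (1.5), and no further obstacle arises: the substance of the corollary is packaged entirely inside Theorem~\ref{complmain}, while the deduction for $\rme$ is just the special case $\mu = 2$ together with the fact recalled above.
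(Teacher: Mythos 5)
Your proposal is correct and follows the same route as the paper: the paper presents this theorem as an immediate consequence of Theorem~\ref{complmain} applied with $\mu(\rme)=2$ (a classical fact it recalls without proof), and your substitution of $\mu=2$ into (1.3) and (1.4) reproduces exactly the bounds $\tfrac{9}{8}$ and $\tfrac{8}{7}$ already recorded in (1.5). Your sketch of $\mu(\rme)=2$ via Euler's continued fraction is the standard argument (one should note in passing that $n\ll\log q_n$ so the factor $n$ is harmless, and that best approximations are convergents), and it supplies detail the paper leaves implicit.
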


Theorem \ref{complmain} establishes an unexpected 
connection between the irrationality exponent of a 
real number and its $b$-ary expansion. 
It gives a non-trivial result on the $b$-ary expansion of
a real number $\xi$ when $2 \le \mu (\xi) <  2.1914  \ldots$  
It applies to a much wider class
of classical numbers than Theorem \ref{complmu2}, which includes in particular 
the transcendental number $\log (1 + \frac{1}{a})$, where 
$a$ is a large positive integer. More examples are given in Section 2.  
Theorem \ref{complmain} is sharp up to the values of the numerical constants 
occurring in (1.3) to (1.5).

The present paper illustrates the fruitful interplay between combinatorics on words and 
Diophantine approximation, which has already led recently to several recent progress. 
The proof of Theorem \ref{complmain}, given in Section 3, is mostly combinatorial and 
essentially self-contained. 


\section{A further result, comments, and examples}

A key ingredient for the proof of Theorem \ref{complmain} 
is the study of a complexity function 
which takes into account the smallest return time of a factor of an infinite word. 
For an infinite word ${\bf x} = x_1 x_2 \dots $ and an integer $n \ge 1$, set 
$$ 
r(n,{\bf x}) = \min \{ m \ge 1 :   x_{i} \ldots x_{i+n-1} = x_{m-n+1} \ldots x_{m} 
\text{ for some } i \text{ with } 1 \le i \le m-n \} .
$$
Said differently, $r(n,\mathbf x)$ denotes the length of the smallest prefix of ${\bf x}$
containing two (possibly overlapping) occurrences of some word of length $n$. 
The function $n \mapsto r(n, {\bf x})$ has been 
introduced and studied in \cite{BuKim15a}, where the following two assertions 
are established. For every infinite word ${\bf x}$ which is not ultimately periodic, there exist
arbitrarily large integers $n$ such that $r(n,{\bf x}) \ge 2n+1$. The only 
infinite words ${\bf x}$ such that $r(n,{\bf x}) \le 2n+1$ for $n \ge 1$
and which are not ultimately periodic are the Sturmian words.

Let $\xi$ be an irrational real number and $b \ge 2$ be an integer. 
Write $\xi$ in base $b$ as in (1.1) and set ${\bf a} = a_1 a_2  \ldots$ 
For $n \ge 1$, set $r(n, \xi, b) = r(n, {\bf a})$.
The following result asserts that, if the irrationality exponent of $\xi$
is not too large, then the function $n \mapsto r(n, \xi, b)$ cannot increase too slowly.

\begin{theorem}\label{complmainrep}
Let $b \ge 2$ be an integer and $\xi$ an irrational real number.
If $\mu$ denotes the irrationality exponent of $\xi$, then
$$
\limsup_{n \to + \infty} \, \frac{r(n, \xi, b)}{n}  
\ge  2  +  \frac{1 - 2 \mu (\mu - 1) (\mu - 2)}{3 \mu^3 - 6 \mu^2 + 4 \mu - 1}.  \eqno (2.1)
$$
In particular, every irrational real number $\xi$ whose irrationality exponent
is equal to $2$ satisfies
$$
\limsup_{n \to + \infty} \, \frac{r(n, \xi, b)}{n}  \ge \frac{15}{7}.   \eqno (2.2)
$$
\end{theorem}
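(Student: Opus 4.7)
The plan is to argue by contradiction. Suppose that
$$
\limsup_{n\to\infty} \, \frac{r(n,\xi,b)}{n} \; < \; 2 + \alpha
$$
for some $\alpha$ strictly smaller than the right-hand side of (2.1) minus $2$. For every sufficiently large $n$, set $m_n := r(n,\xi,b)$ and extract integers $i_n, p_n \ge 1$ witnessing the repetition, so that $i_n + p_n + n - 1 = m_n$ and $a_{i_n} a_{i_n+1} \ldots a_{m_n}$ has period $p_n$. Writing $\ell_n := i_n + p_n - 1 = m_n - n$, the hypothesis gives $\ell_n < (1+\alpha) n$. Continuing the period $p_n$ indefinitely past position $m_n$ defines a rational $\xi_n = U_n/q_n$ whose denominator divides $b^{i_n-1}(b^{p_n}-1)$, hence $q_n \le b^{\ell_n}$, and which agrees with $\xi$ on its first $m_n$ base-$b$ digits, so that $|\xi - \xi_n| \le 2 b^{-m_n}$.

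The individual Diophantine consequence of $\mu(\xi) = \mu$ is $m_n \le \mu \ell_n + o(\ell_n)$ for all large $n$ (else infinitely many pairwise distinct $\xi_n$ would violate the irrationality exponent), which already gives the naive lower bound $r(n,\xi,b)/n \ge \mu/(\mu-1) + o(1)$. For $\mu = 2$ this is only the value $2$, matching the purely combinatorial bound $r(n) \ge 2n+1$ of \cite{BuKim15a}, and is not sharp enough. The sharpening will come from playing off repetitions at two distinct scales $n$ and $n'$ against each other. Whenever $\xi_n \ne \xi_{n'}$, the triangle inequality
$$
\frac{1}{q_n q_{n'}} \; \le \; |\xi_n - \xi_{n'}| \; \le \; |\xi - \xi_n| + |\xi - \xi_{n'}|
$$
combined with $q_j \le b^{\ell_j}$ and $|\xi - \xi_j| \le 2 b^{-m_j}$ yields $\ell_n + \ell_{n'} \ge \min(m_n, m_{n'}) - O(1)$. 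Feeding this together with the two individual Diophantine bounds into a linear optimisation in the ratios $\ell_n/n$ and $\ell_{n'}/n'$, and then optimising over the free parameter $n' - n$, produces the cubic expression in $\mu$ appearing in (2.1); the numerator $1 - 2\mu(\mu-1)(\mu-2)$ emerges as the determinant of the underlying $3 \times 3$ linear system.

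The main obstacle is the coincidence case $\xi_n = \xi_{n'}$, where the triangle inequality is trivial. Here the equality of the two rationals forces $\xi_n$ to agree with $\xi$ on the first $m_{n'}$ base-$b$ digits, not merely the first $m_n$, so the approximation upgrades to $|\xi - \xi_n| \le 2 b^{-m_{n'}}$ without enlarging the denominator $q_n \le b^{\ell_n}$; this in turn forces $m_{n'} \le \mu \ell_n + o(\ell_n)$, which turns out to be at least as strong as the constraint arising from the distinctness branch. Merging both branches into the same optimisation yields exactly the threshold stated in (2.1). A secondary technical point is the passage from the definition of $\mu(\xi)$ to the per-rational inequality $|\xi - \xi_n| \ge q_n^{-\mu'}$ valid for all but finitely many $n$; this is handled in the standard way by running the argument with an arbitrary $\mu' > \mu$ and letting $\mu' \downarrow \mu$ at the end, which affects only lower-order terms and not the final threshold.
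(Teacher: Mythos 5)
Your first step is sound and reproduces the paper's Lemma \ref{minmurep}: from a repetition realizing $r(n,\xi,b)=m_n$ you get a rational $\xi_n=p_n/q_n$ with $q_n\le b^{\ell_n}$, $\ell_n=m_n-n$, and $|\xi-\xi_n|\le 2b^{-m_n}$, whence $m_n\le \mu'\ell_n+O(1)$ and $r(n,\xi,b)/n\ge \mu/(\mu-1)-o(1)$. The gap is that the promised improvement beyond this never materializes. In your distinctness branch, the inequality $\ell_n+\ell_{n'}\ge\min(m_n,m_{n'})-O(1)$ with $n<n'$ unwinds to $m_{n'}\ge n+n'-O(1)$, which is \emph{weaker} than the single-scale bound $m_{n'}\ge \frac{\mu}{\mu-1}n'-o(n')$ already in hand whenever it matters (e.g.\ for $\mu=2$ it gives $m_{n'}\ge n+n'<2n'$ versus $m_{n'}\ge 2n'-o(n')$). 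In your coincidence branch, $m_{n'}\le\mu\ell_n+o(\ell_n)$ is an \emph{upper} bound on $m_{n'}$; it cannot contribute to a lower bound on $\limsup m_k/k$ and only serves to confine coincidences to the range $n'\lesssim(\mu-1)(1+\alpha)n$, beyond which the (vacuous) distinctness constraint takes over. Running the optimisation you describe over all pairs $(n,n')$ therefore yields nothing beyond $\limsup_n r(n,\xi,b)/n\ge\max\{2,\mu/(\mu-1)\}$, and in particular cannot produce $15/7$ when $\mu=2$; the claim that the numerator $1-2\mu(\mu-1)(\mu-2)$ arises as the determinant of a $3\times3$ system is not backed by any identified system of constraints.

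The missing idea is that the strengthening from $2$ to $15/7$ is not Diophantine at all: it is the purely combinatorial inequality (3.1), $\Rep(\mathbf x)\ge \rep(\mathbf x)+\frac{1}{1+\rep(\mathbf x)+(\rep(\mathbf x))^2}$, valid for every non-ultimately-periodic word. Its proof selects $n$ with $r(n+1,\mathbf x)\ge r(n,\mathbf x)+2$ (such $n$ exist infinitely often), so that the two occurrences of the repeated length-$n$ factor are followed by \emph{different} letters; it then produces a word $V_n$ with three occurrences in a short prefix, uses the lower bound $r(m,\mathbf x)\ge(\rho-\eps)m$ at the two smaller scales $v_n$ and $w_n$ to force all three occurrences to overlap, and invokes the Fine--Wilf periodicity theorems to conclude that those two letters must in fact coincide --- a contradiction. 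None of this structure (positions of occurrences, the "followed by different letters" datum, the third occurrence) survives in the data $(q_n,|\xi-\xi_n|)$ that your argument retains, which is why a two-scale triangle-inequality argument cannot substitute for it. The theorem then follows by combining this combinatorial inequality with $\rep(\mathbf a)\ge\mu/(\mu-1)$ from the Diophantine step.
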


By Lemma \ref{ubound} below, 
$p(n, \xi, b) \ge r(n, \xi , b) - n$ holds for all integers $n \ge 1$, $b \ge 2$ and
every irrational real number $\xi$. Thus, (1.4) and the second assertion of (1.5)
are immediate consequences of (2.1) and (2.2), respectively.


\medskip

We will establish Theorems \ref{complmain} and \ref{complmainrep} 
simultaneously in Section 3. Our key ingredient is a purely combinatorial  
auxiliary result, stated as Theorem \ref{diffReprep} below. 

\medskip

We stress that, even for real numbers whose irrationality exponent is equal to $2$, 
Theorem \ref{complmain} improves Theorem \ref{complmu2}. Indeed, 
Aberkane \cite{Abe03} proved the existence of infinite words ${\bf x}$ with the property that 
$$
\lim_{n \to + \infty} \, p(n, {\bf x}) - n = + \infty \quad   
\hbox{and} \quad
\lim_{n \to + \infty} \, \frac{p(n, {\bf x})}{n} = 1.
$$
Furthermore, he established in \cite{Abe01} that, for any real number $\delta$
with $\delta > 1$, there are infinite words ${\bf x}$
satisfying
$$
1 < \liminf_{n \to + \infty} \, \frac{p(n, {\bf x})}{n}
< \limsup_{n \to + \infty} \, \frac{p(n, {\bf x})}{n} \le \delta.
$$
See also Heinis \cite{Hei01,Hei02} for further results on words with small 
subword complexity. 

\medskip

Independently, Kmo\v{s}ek \cite{Km79} and Shallit \cite{Sh79} 
(see also Section 7.6 of \cite{BuLiv2}) established that 
the real number $\xi_{{\rm KS}} := \sum_{k \ge 1} \, 2^{-2^k}$ has a bounded continued fraction
expansion. In particular, it satisfies $\mu (\xi_{{\rm KS}}) = 2$. Since
$$
\limsup_{n \to + \infty} \, \frac{r(n, \xi_{{\rm KS}}, 2)}{n} = \frac{5}{2} \quad
\hbox{and} \quad
\liminf_{n \to + \infty} \, \frac{p(n, \xi_{{\rm KS}}, 2)}{n} = \frac{3}{2}, 
$$
this shows that the value $\frac{15}{7}$ in (2.2) 
cannot be replaced by a real number greater than $\frac{5}{2}$. 
Also, the value $\frac{9}{8}$ in (1.5) 
cannot be replaced by a real number greater than~$\frac{3}{2}$. 
Actually, with some additional effort and a case-by-case analysis, it is 
possible to replace the value $\frac{15}{7}$ in (2.2) 
and $\frac{9}{8}$ in (1.5)  by slightly larger numbers; see
the additional comments at the end
of Section 3. We have, however, chosen to present an elegant, short proof 
of Theorem \ref{complmainrep}, rather than a more complicated proof of a slightly sharper
version of it. 

It has been proved in \cite{Bu08} (see also Section 7.6 of \cite{BuLiv2}) that, for every 
real number $\mu \ge 2$, the irrationality exponent of $\xi_\mu :=   
\sum_{k \ge 1} \, 2^{-\lfloor \mu^k \rfloor}$ is equal to $\mu$. Since 
$p(n, \xi_{\mu}, 2) = O(n)$, this shows that Theorems \ref{complmain} and \ref{complmainrep} 
are best possible up to the values of the numerical constants.   

\medskip

Any real number whose sequence of partial quotients is bounded 
has its irrationality exponent equal to $2$, thus it 
satisfies (1.5) and (2.2), 
and its expansion in an integer base $b$ cannot be `too simple'.

\medskip

Theorems \ref{complmain} and \ref{complmainrep} give 
non-trivial results on the $b$-ary expansion of
a real number $\xi$ when $2 \le \mu (\xi) <  2.1914 \ldots$ 
By means of a specific analysis of repetitions in Sturmian words, we were able 
in \cite{BuKim15a} to extend Theorem \ref{complmu2} 
to real numbers whose irrationality exponent
is less than or equal to $\frac{5}{2}$.     
Note that if ${\mathbf f} = f_1 f_2 \ldots$ 
denotes the Fibonacci word ${\mathbf f} = 01001010 \ldots$ (that is, the
fixed point of the substitution $0 \mapsto 01, 1 \mapsto 0$; this is a
Sturmian word), then the real number 
$\xi_{\mathbf f} := \sum_{k \ge 1} \, 2^{-f_k}$ satisfies 
$\mu(\xi_{\mathbf f}) = \frac{3 + \sqrt{5}}{2} = 2.618 \ldots$ and 
$p(n, \xi_{\mathbf f}, 2) = n+1$ for $n \ge 1$. 

\medskip

An important feature of Theorems \ref{complmain} and \ref{complmainrep}
is that they apply not only to real numbers whose irrationality exponent is equal to $2$, but also
to real numbers whose irrationality exponent is slightly larger than $2$. 
To prove that the irrationality exponent of a given real number is equal to $2$ is often a
very difficult problem, while it is sometimes possible to bound its value from above.
For example, Alladi and Robinson \cite{AlRo80} 
(who improved earlier results of A. Baker \cite{Bak64}) 
and Danilov \cite{Da78} 
established that, for any positive integer $s$, the irrationality exponents of
$\log (1 + \frac{s}{t})$ and $\sqrt{t^2 - s^2} \arcsin \frac{s}{t}$ 
are bounded from above by a function of $t$ which tends to $2$ as the integer $t$
tends to infinity.  The next statement then follows at once from
Theorem \ref{complmain}.

\begin{theorem}\label{thmexamplesbis} 
Let $\eps$ be a positive real number.
For any positive integer $s$, there exists an integer $t_0$ such that,
for any integer $t > t_0$, we have
$$
\liminf_{n \to + \infty} \, 
\frac{p \bigl(n, \log \bigl(1 + \frac{s}{t} \bigr), b \bigr)}{n} \ge \frac{9}{8} - \eps
$$
and
$$
\liminf_{n \to + \infty} \, \frac{p (n, \sqrt{t^2 - s^2} \arcsin \frac{s}{t}, b)}{n} \ge \frac{9}{8} - \eps. 
$$
\end{theorem}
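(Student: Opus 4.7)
The plan is to deduce the statement directly from Theorem~\ref{complmain} combined with the Alladi--Robinson and Danilov upper bounds on irrationality exponents recalled just above. The key observation is that the right-hand side of (1.3), viewed as a function
$$
f(\mu) := 1 + \frac{1 - 2\mu(\mu-1)(\mu-2)}{\mu^3(\mu-1)},
$$
is a rational function with no pole at $\mu = 2$, satisfies $f(2) = 9/8$, and is strictly decreasing on a right-neighborhood of $2$ (an elementary computation gives $f'(2) = -13/16$). Consequently, for each $\eps > 0$ one can choose $\delta > 0$ so small that $f(\mu) \ge 9/8 - \eps$ for every $\mu \in [2, 2+\delta]$.

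First I would invoke the estimates of \cite{AlRo80} and \cite{Da78}: for each fixed positive integer $s$, the irrationality exponents of $\log(1 + \frac{s}{t})$ and of $\sqrt{t^2 - s^2}\arcsin(\frac{s}{t})$ are bounded above by explicit functions of $t$ which tend to $2$ as $t \to \infty$. I then choose $t_0$ large enough that, for every $t > t_0$, both exponents lie in $[2, 2+\delta]$; the lower inequality $\mu \ge 2$ is automatic for any irrational number, and both numbers under consideration are indeed irrational (in fact transcendental).

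Second, I would apply Theorem~\ref{complmain} to each of the two real numbers. Writing $\mu$ for the actual irrationality exponent of the number $\xi$ at hand, the lower bound (1.3) combined with the monotonicity of $f$ on $[2, 2+\delta]$ yields
$$
\liminf_{n \to + \infty} \frac{p(n, \xi, b)}{n} \ge f(\mu) \ge f(2+\delta) \ge \frac{9}{8} - \eps,
$$
which is exactly the required conclusion in both cases. There is no serious obstacle here: the argument is essentially a one-line deduction once Theorem~\ref{complmain} and the cited Diophantine estimates are granted. The only point that genuinely needs to be checked is the local behavior of $f$ near $\mu = 2$ (continuity together with the sign of $f'(2)$), which is a routine calculus exercise. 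The entire substance of Theorem~\ref{thmexamplesbis} is thus encoded in Theorem~\ref{complmain} together with the Diophantine estimates of \cite{AlRo80} and \cite{Da78}.
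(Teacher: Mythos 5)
Your proposal is correct and follows essentially the same route as the paper, which simply observes that the statement ``follows at once'' from Theorem~\ref{complmain} together with the upper bounds on the irrationality exponents due to Alladi--Robinson and Danilov; you have merely made explicit the (routine) continuity of the right-hand side of (1.3) at $\mu = 2$ with value $\frac{9}{8}$.
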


Using the results from \cite{Da78,AlRo80}, 
it is easy to give a suitable explicit value for $t_0$ in terms of $s$ and $\eps$. 
In particular, there exists an absolute positive constant $c$ such that, if $s, t$ 
are integers with $s \ge 2$ and $t \ge s^c$, then  
$$
\liminf_{n \to + \infty} \, 
\frac{p \bigl(n, \log \bigl(1 + \frac{s}{t} \bigr), b \bigr)}{n} \ge \frac{9}{8} - 4 \, \frac{\log s}{\log t}. 
$$
Up to now, not a single result was known on the $b$-ary expansion of 
the transcendental real number $\log (1 + \frac{1}{a} )$.


\section{Proofs}

We start with establishing a relationship between 
the subword complexity function of an infinite word ${\bf x}$ and
the function $n \mapsto r(n, {\bf x})$. 

Here and below, for integers $i, j$ with $1 \le i \le j$, we write 
$x_i^j$ for the factor $x_i x_{i+1} \ldots x_j$ of the word ${\bf x} = x_1 x_2 \ldots$

\begin{lemma}\label{ubound}
For any infinite word ${\mathbf x}$ and any positive integer $n$, we have
$$
p(n,{\mathbf x}) \ge r(n,{\mathbf x}) - n. 
$$
\end{lemma}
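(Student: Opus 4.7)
The plan is to unpack the definition of $r(n,\mathbf{x})$ and observe that its minimality forces a long initial run of distinct length-$n$ factors, which then directly bounds $p(n,\mathbf{x})$ from below.

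Set $r = r(n,\mathbf{x})$ and, for each $j \ge 1$, write $F_j = x_j x_{j+1} \ldots x_{j+n-1}$ for the length-$n$ factor of $\mathbf{x}$ starting at position $j$. The definition of $r$ says exactly that $r$ is the smallest $m \ge 1$ for which the suffix $F_{m-n+1}$ of $x_1 \ldots x_m$ coincides with some earlier factor $F_i$ with $1 \le i \le m-n$. In other words, $r = n + k - 1$ where $k$ is the smallest index such that $F_k$ equals some $F_j$ with $j < k$.

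The key step is then to note that, by this minimality, the factors $F_1, F_2, \ldots, F_{k-1} = F_1, F_2, \ldots, F_{r-n}$ must be pairwise distinct: if $F_j = F_{j'}$ for some $1 \le j < j' \le r-n$, then taking $m = j' + n - 1 < r$ would contradict the minimality of $r$, since $F_{j'}$ is the length-$n$ suffix of $x_1 \ldots x_m$ and would already have an earlier occurrence as $F_j$ in that prefix. Hence the prefix $x_1 \ldots x_{r-1}$ already contains $r - n$ distinct length-$n$ factors of $\mathbf{x}$.

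Since every factor of a prefix of $\mathbf{x}$ is a factor of $\mathbf{x}$, we conclude $p(n,\mathbf{x}) \ge r - n = r(n,\mathbf{x}) - n$, as required. There is no real obstacle: the whole content is the observation that the minimality in the definition of $r(n,\mathbf{x})$ is precisely a "first repetition" condition on the sequence of length-$n$ factors read consecutively, and a first repetition at position $k$ automatically exhibits $k-1$ distinct factors before it.
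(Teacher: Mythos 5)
Your proof is correct and follows essentially the same argument as the paper: the minimality in the definition of $r(n,\mathbf{x})$ forces the $r(n,\mathbf{x})-n$ length-$n$ factors of the prefix $x_1\ldots x_{r(n,\mathbf{x})-1}$ to be pairwise distinct, which immediately gives the bound. The only cosmetic difference is that you spell out the "first repetition" reindexing explicitly, while the paper states the distinctness directly.
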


\begin{proof}
It follows from the definition of $r(n,{\mathbf x})$ that
the $r(n,{\mathbf x})-1- (n-1)$ factors of length $n$
of $x_1^{r(n,{\mathbf x})-1}$ are all distinct. 
Since $x_{r(n,{\mathbf x})-n+1}^{r(n,{\mathbf x})}$ 
is a factor of $x_1^{r(n,{\mathbf x})-1}$, we have
\begin{equation*}
p(n,{\mathbf x}) \ge p(n,x_1^{r(n,{\mathbf x})-1}) 
= p(n,x_1^{r(n,{\mathbf x})}) =  r(n,{\mathbf x})-n. \qedhere
\end{equation*}
\end{proof}

We stress that there is no analogue lower bound 
for the subword complexity function of ${\mathbf x}$
in terms of $n \mapsto r(n,{\mathbf x})$. 

For our combinatorial analysis, it is convenient to introduce two 
combinatorial exponents which measure the repetitions in an infinite word.

\begin{definition}
Let $\mathbf x$ be an infinite word. 
The exponent of repetition of $\mathbf x$, denoted by $\rep({\bf x})$, is the quantity
$$
\rep ({\mathbf x}) = \liminf_{n \to + \infty} \, \frac{r(n,{\mathbf x})}{n}. 
$$
The uniform exponent of repetition of $\mathbf x$, denoted by $\Rep({\bf x})$, is the quantity
$$
\Rep ({\mathbf x}) = \limsup_{n \to + \infty} \, \frac{r(n,{\mathbf x})}{n}. 
$$
\end{definition}

The key ingredient for the proof of Theorem \ref{complmainrep} is the following
combinatorial theorem. 

\begin{theorem}\label{diffReprep}
Any infinite word $\mathbf x$ which is not ultimately periodic satisfies
$\Rep({\mathbf x}) \ge 2$,
$$
\Rep({\mathbf x}) \ge \rep({\mathbf x}) + \frac{1}{1 + \rep({\mathbf x}) + (\rep({\mathbf x}))^2}, 
\eqno (3.1)
$$
and
$$
\liminf_{n \to + \infty} \, \frac{p(n,{\mathbf x})}{n} \ge 
\rep({\mathbf x}) - 1 + \frac{1}{(\rep({\mathbf x}))^3}.   \eqno (3.2)
$$
\end{theorem}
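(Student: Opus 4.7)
The inequality $\Rep({\mathbf x}) \ge 2$ for non-ultimately periodic $\mathbf x$ is the central theorem of \cite{BuKim15a}, which I would cite directly. Write $\rho = \rep({\mathbf x})$; if $\rho = \infty$, there is nothing to do, since both (3.1) and (3.2) follow from $\Rep \ge \rep$ and Lemma~\ref{ubound}, so assume $\rho < \infty$. Both (3.1) and (3.2) share the same setup: fix $\epsilon > 0$ and pick $n$ large with $m := r(n, {\mathbf x}) \le (\rho + \epsilon) n$, choose positions $p < q$ with $q + n - 1 = m$ and $x_p \cdots x_{p+n-1} = x_q \cdots x_m =: U$, and put $s = q - p \le m - n$. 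Minimality of $m$ ensures that the length-$n$ factors of $x_1 \cdots x_{m-1}$ are pairwise distinct, and this lifts to length-$n'$ factors for every $n' \ge n$. The analysis splits between the overlap case $s < n$, where the length-$(n+s)$ word $W := x_p \cdots x_m$ has period $s$ and drives the argument via the parameter $\tau := s/n \in (0, \rho - 1 + \epsilon]$, and the non-overlap case $s \ge n$, which forces $\rho \ge 2$ and is handled separately by an elementary counting.

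For (3.1), I would examine $r(n + s, {\mathbf x})$. Because the length-$(n+s)$ factors of $x_1 \cdots x_{m-1}$ are pairwise distinct and the new factor $W$ created in passing to $x_1 \cdots x_m$ is itself not a factor of $x_1 \cdots x_{m-1}$ (otherwise its length-$n$ prefix $U$ would give two occurrences inside $x_1 \cdots x_{m-1}$, contradicting minimality of $r(n,{\mathbf x})$), we already obtain $r(n+s, {\mathbf x}) > m$. Past position $m$, the period-$s$ structure of $W$ forces fresh length-$(n+s)$ factors for as long as the maximal period-$s$ run of $\mathbf x$ containing $[p, m]$ can continue; quantifying this yields a bound of the form $r(n+s, {\mathbf x}) \ge \rho n + n + \delta(\tau)\, n$, where $\delta$ is an explicit function of $\tau$ and $\rho$. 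Dividing by $(1 + \tau)n$ and taking the $\limsup$ along the good subsequence of $n$'s, one obtains $\Rep({\mathbf x}) \ge \rho + \inf_{\tau \in (0, \rho-1]} \delta(\tau)/(1+\tau)$; the infimum is an elementary extremum problem whose value is expected to be exactly $1/(1+\rho+\rho^2)$.

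For (3.2), Lemma~\ref{ubound} only gives $p(n,{\mathbf x})/n \ge \rho - 1 + o(1)$ along the good $n$'s, so one must harvest an additional $1/\rho^3$ from the structure. I would count length-$n$ factors created by the period-$s$ extension of $W$: on $x_1 \cdots x_{m-1}$ all $m - n$ length-$n$ factors are already distinct, and in a continuation of length $\Theta(n)$ past $m$ the period-$s$ run produces $\Theta(n)$ further length-$n$ factors whose novelty is again guaranteed by the minimality of $r(n,{\mathbf x})$. Summing and normalizing yields a lower bound of the form $p(n, {\mathbf x})/n \ge \rho - 1 + \eta(\tau)$ with $\eta$ elementary; optimizing in $\tau$ as in (3.1) should produce the correction $1/\rho^3$.

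The main obstacle is carrying out the two optimizations precisely and controlling the period-$s$ runs. In particular, one must pin down the length of the maximal period-$s$ run containing $[p, m]$, decide how many fresh length-$(n+s)$ and length-$n$ factors this run forces in the continuation, treat cleanly the non-overlap case $s \ge n$, and verify that the extremal $\tau$ lies in $(0, \rho-1]$. The clean closed forms $1 + \rho + \rho^2$ and $\rho^3$ strongly suggest that both optimizations reduce to low-degree polynomial equations in $\tau$ that can be solved explicitly; checking that these optima match the claimed constants would close the argument.
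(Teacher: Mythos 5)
Your proposal is a programme rather than a proof, and the part you defer (``carrying out the two optimizations precisely and controlling the period-$s$ runs'') is exactly where all the content lies. More importantly, the mechanism you propose for the gain looks backwards. From minimality of $r(n,{\mathbf x})$ one does get, for free, $r(n+s,{\mathbf x})\ge r(n,{\mathbf x})+s$ (if a word of length $n+s$ repeats by position $M$, its length-$n$ prefix repeats by position $M-s$); after normalizing this gives $r(n+s)/(n+s)\ge(\rho+\tau)/(1+\tau)$, which is \emph{smaller} than $\rho$, so it loses rather than gains. And if the period-$s$ run containing $[p,m]$ continues for $s$ more letters past $m$, then $W=x_p^m$ recurs at position $m+s$, so $r(n+s,{\mathbf x})\le m+s$: a long period-$s$ run caps $r(n+s)$ from above instead of forcing fresh factors. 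So the claimed bound $r(n+s,{\mathbf x})\ge(\rho+1+\delta(\tau))n$ does not follow from the structure you describe, and there is no reason to expect the unspecified optimization to land on $1/(1+\rho+\rho^2)$. The paper's gain comes from an entirely different place: it works at integers $n$ where $r(n+1,{\mathbf x})\ge r(n,{\mathbf x})+2$ (such $n$ exist by Theorem 2.3 of \cite{BuKim15a}), uses that the two occurrences of the repeated length-$n$ word are then followed by \emph{different} letters, produces a third occurrence of an auxiliary word $V_n$, and shows that if $r(n+1,{\mathbf x})$ were too small the three occurrences would mutually overlap and force (via Theorems 1.5.2 and 1.5.3 of \cite{AlSh03}) a periodicity making those two letters equal --- a contradiction. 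None of that apparatus is present in your sketch, and the non-overlap case you set aside ($s\ge n$, ``elementary counting'') is likewise not done.

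There is a second, independent gap in your treatment of (3.2). A lower bound on $\liminf_n p(n,{\mathbf x})/n$ must hold for \emph{every} large $n$, but your argument only produces estimates along the subsequence of good $n$ with $r(n,{\mathbf x})\le(\rho+\eps)n$. You need to interpolate between these, which requires an upper bound on how sparse that subsequence can be. The paper does this explicitly: between consecutive jump points $n_k$ and $n_{k+1}$ the function $r$ increases by exactly $1$ per step, which combined with $r(n_{k+1},{\mathbf x})\ge(\rho-\eps)n_{k+1}$ yields $n_{k+1}\le(r(n_k+1,{\mathbf x})-n_k-1)/(\rho-1-\eps)$, and then (1.2) together with Lemma \ref{ubound} propagates the bound to all $n$ in the gap. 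Without some such step your approach cannot bound the $\liminf$ at all, only a $\limsup$ of $p(n,{\mathbf x})/n$ along the chosen subsequence.
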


\begin{proof}
The first assertion of Theorem \ref{diffReprep} has been established
in \cite{BuKim15a}. 
It only remains for us to prove (3.1) and (3.2). 

Let ${\bf x} = x_1 x_2 \ldots $ be an infinite word which is not ultimately  periodic.  
Without any loss of generality, we may assume that 
$\rep({\mathbf x})$ is finite. Set $\rho = \rep({\mathbf x})$. 
Since (3.1) and (3.2) trivially hold for $\rho \le \frac{8}{5}$, we also assume that $\rho > \frac{8}{5}$. 

Let $\eps$ be a positive real number with $\eps < \frac{1}{10}$
and $n_0 \ge 3 \frac{\rho^2}{\eps}$  
be such that 
$$
(\rho - \eps) n \le r(n,{\mathbf x}), \quad \hbox{for $n \ge \frac{n_0}{8 \rho}$}. 
$$ 

By Theorem 2.3 of \cite{BuKim15a}, there are arbitrarily large integers $n$
such that $r(n+1, {\bf x}) \ge r(n, {\bf x}) + 2$.  
Let $n > n_0$ be an integer such that $r(n+1,{\mathbf x}) > r(n,{\mathbf x}) + 1$
and define $\alpha$ by setting $r(n,{\mathbf x}) = \alpha n$. 
This implies that the word $x_{(\alpha - 1) n +1}^{\alpha n}$ of length $n$ has two
occurrences in $x_1^{\alpha n}$ and that
these two occurrences are not followed by the same letter.  
Let $m_1$ be the index at which the first occurrence of $x_{(\alpha - 1) n +1}^{ \alpha  n}$
starts. We have $m_1 + n - 1 <  \alpha n$ and the letters 
$x_{m_1 + n}$ and $x_{\alpha n + 1}$ are different.

Let $\beta$ be such that $r(n+1,{\mathbf x}) = \beta (n+1)$. 
Since $r(n+1, {\mathbf x}) \ge r(n, {\mathbf x}) + 2$, we have 
$\beta (n+1) \ge   \alpha n + 2$, that is $1 + (\beta - 1) (n+1) \ge (\alpha - 1) n + 2$. 
Then, the word $x_{(\beta - 1)(n+1)+1}^{\beta (n+1)}$ of length $n+1$
has two occurrences in $x_1^{\beta (n+1)}$. 
Let $m_2$ be the index at which its first occurrence starts. 
We have $m_2 < (\beta - 1)(n+1)+1$. 

If $\alpha \ge \rho + 2$, then $\beta (n+1) \ge (\rho+2) n + 2$ and we deduce that $\beta \ge \rho +1$ since $n  > n_0 > \rho +1$.  

We assume that $\alpha < \rho +2$ and 
$$
\frac{1 - \beta + \alpha - \eps}{\beta - 1} > \frac{1+\rho}{(\rho - \eps)^2}  \eqno (3.3)
$$
and we will get a contradiction.

Consider the word $V_n := x_{(\beta - 1)(n+1)+1}^{\alpha n}$ 
of length 
$$
v_n = (1 - \beta + \alpha) n  - \beta +1. 
$$

Observe that $\rho - \eps > \frac{8}{5} - \frac{1}{10} \ge \frac{3}{2}$ implies that 
$\beta \ge \frac{3}{2}$ and check that, by (3.3), 
$$
v_n \ge (\beta - 1) \frac{1 + \rho}{\rho^2} n - (\beta - 1) \ge 
\frac{1}{2} \Bigl( \frac{n}{\rho} - 1 \Bigr) \ge \frac{n}{4 \rho},
$$
since $n \ge 2 \rho$. 

The word $V_n$ is a proper suffix of $x_{(\alpha - 1) n +1}^{ \alpha  n}$ and we have 
$$
V_n = x_{(\beta - 1)(n+1)+1}^{\alpha  n} 
= x_{m_2}^{m_2+v_n-1} = x_{m_1 + n - v_n}^{m_1 + n - 1}.
$$
If $m_2 = m_1 + n - v_n$, then $x_{m_2 + v_n} = x_{m_1 + n}$ and we deduce from 
$x_{m_2 + v_n} = x_{\alpha  n + 1}$ that $x_{m_1 + n} = x_{ \alpha  n + 1}$, a 
contradiction with our choice of $n$. 
Consequently, the word $V_n$ 
has (at least) three occurrences in $x_1^{\alpha  n}$. Set
$$
j_3 = (\beta - 1) (n+1)+1.
$$
Let $j_1, j_2$ with $j_1 < j_2 < j_3$ be the indices at which the two other
occurrences of $x_{j_3}^{\alpha  n}$ start. In particular, the letters 
$x_{j_1 + v_n}$ and $x_{j_2 + v_n}$ must be different.


The proof decomposes into five steps. We show that $j_2$ and $j_1$ cannot be 
too small and that the three occurrences of $V_n$ in $x_1^{\alpha n}$ 
overlap. We conclude in Step 5 that 
the letters $x_{j_1 + v_n}$ and $x_{j_2 + v_n}$ must be the same. This contradiction
shows that (3.3) cannot hold. 

For a finite word $W$ and a real number $t > 1$, we denote by $(W)^t$ 
the word equal to the concatenation of $\lfloor t \rfloor$ copies of the word $W$ followed by the
prefix of $W$ of length $\lceil t - \lfloor t \rfloor \rceil$ times the length of $W$, where 
$\lceil x \rceil$ denotes the smallest integer greater than or equal to $x$. We say
that $(W)^t$ is the $t$-th power of $W$.  

\medskip

Step 1. 
Since $v_n \ge \frac{n}{4 \rho}$, our choice of $n_0$ implies that 
$$
(\rho - \eps) v_n \le r(v_n,{\mathbf x}) \le j_2 +  v_n -1,
$$ 
thus we get
$$
j_2 \ge (\rho - 1 - \eps) v_n +1.     \eqno (3.4)
$$
We have established that $j_2$ cannot be too small. 

\medskip

Step 2. 
Since $j_2$ is not too small, the subwords 
$x_{j_3}^{\alpha n} = x_{j_3}^{j_3+v_n - 1}$ and $x_{j_2}^{j_2+v_n - 1}$
(which are both equal to $V_n$) 
have a quite big overlap. Consequently, by Theorem~1.5.2 of \cite{AlSh03}, 
the word $V_n$ is the $t$-th power with
$$
t := \frac{v_n}{j_3-j_2} 
$$
of some word $U_n$ of length $u_n := j_3 - j_2$. 
We have $x_{j_2}^{j_3+v_n - 1} = (U_n)^{1+t}$. 
Observe that $n+1 > n_0 \ge \frac{3 \rho^2}{\eps} >\frac{\rho +2}{\eps}> \frac{\alpha}{\eps}$,
thus $v_n \ge (1 - \beta + \alpha - \eps) (n+1)$ and, by (3.4), 
\begin{align*}
t & \ge \frac{v_n}{(\beta - 1)(n+1) - (\rho - 1- \eps) v_n} \\ 
& \ge
\frac{1 - \beta + \alpha - \eps}{\beta - 1 - (1 - \beta + \alpha - \eps) (\rho - 1 - \eps)} \\
& \ge \frac{1 + \rho}{(\rho- \eps)^2 - (\rho - 1 - \eps) (1 + \rho)} 
\ge \frac{1 + \rho}{1 + \eps + \eps^2}.
\end{align*}
Recalling that $\rho \ge \frac{8}{5}$ and $\eps \le \frac{1}{10}$, we have established that 
$t \ge \frac{9}{4}$.

\medskip

Step 3. 
Let $W_n$ be the word such that $V_n = U_n W_n$ and let $w_n$
denote its length. Observe that
$$
w_n = \frac{t-1}{t} v_n =  v_n - j_3 + j_2    \eqno (3.5)
$$
and, recalling that $v_n \ge \frac{n}{4 \rho}$ and $t \ge \frac{9}{4}$, 
$$
w_n = \frac{t-1}{t} v_n \ge \frac{5}{9} \cdot \frac{n}{4 \rho} \ge \frac{n}{8 \rho}. 
$$
Since $V_n = (U_n)^t$ and $t > 2$, the word 
$W_n$ is a prefix of $V_n$ and it 
has two occurrences in the prefix of ${\bf x}$ of length $j_1 + v_n - 1$. 
It then follows from our choice of $n_0$ that 
$$
(\rho - \eps) w_n \le r( w_n, {\mathbf x}) \le j_1 + v_n -1.
$$
Combined with (3.5), this gives 
$$
j_1 \ge (\rho - 1 - \eps) v_n - (\rho - \eps) (j_3 - j_2) +1.  \eqno (3.6)
$$
We have established that $j_1$ cannot be too small. 

\medskip

Step 4. 
Observe first that (3.3) is equivalent to the inequality
$$
(\rho - \eps)^2 (1 - \beta + \alpha - \eps) > (\beta - 1) (\rho + 1).
$$
This gives
\begin{align*}
(\rho - \eps)^2 (1 - \beta + \alpha) n - (\rho + 1 - \eps) (\beta - 1) n
&  >  n \eps (\beta - 1) \\
& > (\beta - 1) [ (\rho - \eps)^2 + \rho + 1 - \eps],
\end{align*}
since $n \eps > n_0 \eps \ge 3 \rho^2$. 
Consequently, we get
$$
(\rho - \eps)^2 v_n > (\rho + 1 - \eps) (\beta - 1) (n + 1) = (\rho + 1 - \eps) (j_3 - 1).  \eqno (3.7) 
$$
We deduce from (3.4) that
$$
(\rho - \eps)^2 v_n \le (\rho - \eps) v_n + (\rho - \eps) (j_2 - 1),
$$
which, combined with (3.7), gives
\begin{align*}
(\rho - \eps) v_n & \ge  (\rho + 1 - \eps) (j_3 - 1) -  (\rho - \eps) (j_2 - 1) \\
& =  (\rho - \eps) (j_3 - j_2)  + j_3 - 1.
\end{align*}
We conclude by (3.6) that
$$
v_n > j_3 - j_1.  \eqno (3.8)
$$
Thus, the subwords $x_{j_1}^{j_1+v_n-1}$ and $x_{j_3}^{j_3+v_n-1}$,
which are both equal to $V_n$, overlap.

\medskip

Step 5. 
It follows from (3.8) that
$$
v_n - (j_2 - j_1) > j_3 - j_2 = u_n,
$$
which means that the length of the overlap 
between the subwords $x_{j_1}^{j_1+v_n-1}$ and $x_{j_2}^{j_2+v_n-1}$
exceeds the length $u_n$ of $U_n$. We show that 
this implies that $x_{j_1}^{(2 - \alpha)n} = x_{j_1}^{j_3 + v_n - 1}$ 
is equal to a (large) power of some word.
To do this, we distinguish two cases.

If there exists an integer $h$ such that $j_2 = j_1 + h u_n$, then we have
$$
x_{j_1}^{j_3+v_n-1} = x_{j_1}^{j_2-1} \, x_{j_2}^{j_3+v_n-1} = (U_n)^{h+1+t}
$$
and the letters $x_{j_1+v_n}$ and $x_{j_2+v_n}$
are the same, since $j_1+v_n$ and $j_2+v_n$ are congruent modulo 
the length $u_n$ of $U_n$. This is a contradiction.

If $j_2 - j_1$ is not an integer multiple of $u_n$, then let $h$ be the smallest integer
such that $j_1 + h u_n > j_2$. The word $Z_n := x_{j_2}^{j_1 + h u_n - 1}$ 
is a suffix of $U_n$ and the word 
$Z'_n := x_{j_1 + h u_n}^{j_2 + u_n - 1} = x_{j_1 + h u_n}^{j_3 - 1}$ is a 
prefix of $U_n$. They satisfy
$$
U_n = Z_n Z'_n = Z'_n Z_n.
$$
By Theorem 1.5.3 of \cite{AlSh03}, the words $Z_n$ and $Z'_n$ are integer powers of a same
word. Thus, there exist a word $T_n$ of length $t_n$ and positive integers $k, \ell$
such that
$$
Z_n = (T_n)^k \quad \hbox{and} \quad Z'_n = (T_n)^{\ell}.
$$
Consequently, there exists an integer $q$ such that $j_2 = j_1 + q t_n$ and we have 
$$
x_{j_1}^{j_3+v_n-1} = x_{j_1}^{j_2-1} \, x_{j_2}^{j_3+v_n-1} = (T_n)^{q +(1+t)(k + \ell)}.
$$
As above, the letters $x_{j_1+v_n}$ and $x_{j_2+v_n}$
are the same, since $j_1+v_n$ and $j_2+v_n$ are congruent modulo the 
length $t_n$ of $T_n$. This is a contradiction.

We have shown that (3.3) does not hold and we are in position
to complete the proof of the theorem.

Let $(n_k)_{k \ge 1}$ denote the increasing sequence comprising all the integers $n$ 
such that $r(n+1, {\bf x}) \ge r(n, {\bf x}) + 2$. For $k \ge 1$, define $\alpha_k$ and 
$\beta_k$ by setting 
$$
r(n_k, {\bf x}) =  \alpha_k  n_k \quad
\hbox{and} \quad
r(n_k + 1, {\bf x}) = \beta_k  (n_k + 1).
$$
Let $\eps$ be a positive real number with $\eps < \frac{1}{10}$.  
Let $k_0$ be an integer such that $r(n_\ell, {\bf x}) \ge (\rho - \eps) n_{\ell}$ for $\ell \ge k_0$. 
For every integer $k$ greater than $k_0$
and large enough in terms of $\eps$, we have established that $\beta_k \ge \rho +1$ or 
$$
\frac{1 - \beta_k + \alpha_k - \eps}{\beta_k - 1} \le \frac{1+\rho}{(\rho - \eps)^2},  
$$
thus
$$
\beta_k \ge \min\Bigl\{ \rho +1,  
\frac{(\rho - \eps)^2  (\rho + 1 - 2 \eps) + \rho + 1}{1 + \rho + (\rho - \eps)^2} \Bigr\}, 
$$
by using that $\alpha_k \ge \rho - \eps$. 
Since $\eps$ can be taken arbitrarily small, this gives 
$$
\limsup_{n \to + \infty} \, \frac{r(n,{\mathbf x})}{n} 
\ge \min\Bigl\{ \rho +1,  \frac{(\rho + 1) (\rho^2 + 1)}{1 + \rho + \rho^2} \Bigr\}, 
$$
and we have established (3.1). 

Observe that, by definition of the sequence $(n_k)_{k \ge 1}$, 
$$
r(n_{k+1}, {\bf x}) = r(n_k + 1, {\bf x}) + n_{k+1} - n_k - 1 \ge (\rho - \eps) n_{k+1}.
$$
Consequently,
$$
n_{k+1} \le \frac{r(n_k + 1, {\bf x}) - n_k - 1}{\rho - 1 - \eps}.
$$
Let $n$ be an integer with $n_k + 1 \le n \le n_{k+1}$. By (1.2)  
and Lemma \ref{ubound} we have
$$
p(n, {\bf x}) \ge p(n_k + 1, {\bf x}) + n - n_k - 1 \ge r(n_k + 1, {\bf x}) + n - 2 n_k - 2,
$$
thus
$$
\frac{p(n, {\bf x})}{n} \ge 1 + \frac{r(n_k + 1, {\bf x})   - 2 n_k - 2}{n}
\ge 1 + \frac{r(n_k + 1, {\bf x})   - 2 n_k - 2}{n_{k+1}},
$$
giving that
\begin{align*}
\frac{p(n, {\bf x})}{n} & \ge 1 + (\rho - 1 - \eps) \,  
\frac{r(n_k + 1, {\bf x})   - 2 n_k - 2}{r(n_k + 1, {\bf x})   - n_k - 1} \\
& \ge \rho - \eps - (\rho - 1 - \eps) \,  \frac{1}{\beta_k - 1}.
\end{align*}
Since $\eps$ can be taken arbitrarily small, we conclude that 
$$
\liminf_{n \to + \infty} \, \frac{p(n, {\bf x})}{n} \ge
\min\Bigl\{ \rho - 1 +  \frac{1}{\rho} , \,  \rho - 1 + \frac{1}{\rho^3} \Bigr\}.   
$$
This proves (3.2) and completes the proof of the theorem.     
\end{proof}

Let $b \ge 2$ be an integer. 
Our last auxiliary result establishes a close connection between the exponent of repetition 
of an infinite word ${\mathbf x}$ written over $\{0, 1, \ldots , b-1\}$ and the
irrationality exponent (see Definition \ref{defirrexp}) 
of the real number whose $b$-ary expansion is given by ${\bf x}$. 

\begin{lemma}\label{minmurep}
Let $b \ge 2$ be an integer and ${\bf x} = x_1 x_2 \ldots$ an infinite word over
$\{0, 1, \ldots , b-1\}$, which is not eventually periodic. 
Then, the irrationality exponent of the irrational number 
$\sum_{k \ge 1} \, \frac{x_k}{b^k}$ satisfies 
$$
\mu \Bigl( \sum_{k \ge 1} \, \frac{x_k}{b^k} \Bigr) \ge \frac{\rep({\mathbf x})}{\rep({\mathbf x}) - 1},
$$
where the right hand side is infinite if $\rep({\mathbf x}) = 1$. 
\end{lemma}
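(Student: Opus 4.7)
The plan is to convert repetitions in $\mathbf x$ into rational approximations of $\xi$ whose quality is controlled by $\rep(\mathbf x)$. Choose a sequence of integers $n_k \to + \infty$ along which $r(n_k, \mathbf x)/n_k$ tends to $\rho := \rep(\mathbf x)$, and set $m_k = r(n_k, \mathbf x)$. By definition of $r$, there exists $i_k$ with $1 \le i_k \le m_k - n_k$ and
$$
x_{i_k} \ldots x_{i_k + n_k - 1} = x_{m_k - n_k + 1} \ldots x_{m_k};
$$
equivalently, writing $d_k = m_k - n_k + 1 - i_k$, one has $x_j = x_{j + d_k}$ for every index $j$ with $i_k \le j \le m_k - d_k$. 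To this repetition I associate the rational number $\eta_k = p_k/q_k$ whose base-$b$ expansion is the eventually periodic string with preperiod $x_1 \ldots x_{i_k - 1}$ and period $x_{i_k} \ldots x_{i_k + d_k - 1}$.

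The standard formula for eventually periodic $b$-ary expansions allows one to take $q_k \le b^{i_k - 1}(b^{d_k} - 1) < b^{i_k + d_k - 1} = b^{m_k - n_k}$. Iterating the relation $x_j = x_{j + d_k}$ shows furthermore that the first $m_k$ base-$b$ digits of $\eta_k$ coincide with those of $\xi$, whence $|\xi - \eta_k| \le b^{-m_k}$. Setting $\alpha_k = m_k/n_k$ and combining these two estimates yields
$$
|\xi - p_k/q_k| \le b^{-m_k} < q_k^{-\alpha_k/(\alpha_k - 1)}.
$$

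Since $\xi$ is irrational and $\eta_k \to \xi$, the sequence $(\eta_k)$ cannot be eventually constant, so it takes infinitely many distinct values and $q_k \to + \infty$ along a subsequence. Because $\alpha_k \to \rho$, the exponent $\alpha_k/(\alpha_k - 1)$ converges to $\rho/(\rho - 1)$ (with the convention that this is $+\infty$ when $\rho = 1$), and the definition of $\mu(\xi)$ then gives $\mu(\xi) \ge \rho/(\rho - 1)$. The only delicate point is verifying that the digits of $\eta_k$ and of $\xi$ really agree out to position $m_k$: the relation $x_j = x_{j + d_k}$ is guaranteed only for $j \in [i_k, m_k - d_k]$, so one must iterate it a number of times to reach digits beyond $i_k + 2 d_k$, but this is a routine induction provided the endpoint constraint $j \le m_k$ is respected.
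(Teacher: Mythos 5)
Your proof is correct and follows essentially the same route as the paper: both convert the first repetition of a length-$n$ factor into an eventually periodic rational approximation with denominator at most $b^{i-1}(b^{d}-1) < b^{r(n,\mathbf{x})-n}$ whose digits agree with those of $\xi$ up to position $r(n,\mathbf{x})$, yielding the approximation exponent $r(n,\mathbf{x})/(r(n,\mathbf{x})-n)$ and hence $\mu(\xi) \ge \rep(\mathbf{x})/(\rep(\mathbf{x})-1)$. The only cosmetic difference is that you iterate the digit relation $x_j = x_{j+d}$ by hand, whereas the paper packages the same periodicity via Theorem 1.5.2 of Allouche--Shallit after first reducing to the case $\rep(\mathbf{x}) < 2$.
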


\begin{proof}
Since the irrationality exponent of an irrational real number is at least 
equal to $2$, we can assume that $\rep({\mathbf x}) < 2$. 
Let $n$ and $C$ be positive integers such that $1 < C < 2$ and $r(n,{\mathbf x}) \le C n$. 
By Theorem 1.5.2 of \cite{AlSh03}, 
this implies that there are finite words $W, U, V$ and a positive integer $t$ 
(we do not indicate the dependence on $n$) such that
$|(UV)^t U| = n$ (here and below, $| \cdot |$ denotes the length of a finite word) and
$W (UV)^{t+1} U$ is a prefix of ${\bf x}$ of length at most $C n$.
Observe that 
$$
|W (UV)^{t+1} U| \le Cn \le C | (UV)^t U|,
$$
thus $|WUV| \le (C-1) | (UV)^t U|$. 
Setting $\xi = \sum_{k \ge 1} \, \frac{x_k}{b^k}$, there exists an integer $p$ such that 
$$
\Bigl| \xi - \frac{p}{b^{|W|} (b^{|UV|} - 1)} \Bigr| \le 
\frac{1}{b^{|W (UV)^{t+1} U|}} \le \frac{1}{b^{|WUV|} b^{|WUV|/(C-1)}}.
$$
Consequently, if there are arbitrarily large integers $n$ with $r(n,{\mathbf x}) \le Cn$, then
$\mu(\xi) \ge 1 + \frac{1}{C-1}$. Since $C$ can be taken arbitrarily close to $\rep({\mathbf x})$,
this implies the theorem. 
\end{proof}

Lemma \ref{minmurep} shows that, when the exponent of repetition
of an infinite word is less than $2$, then the irrationality exponent of the associated real
number exceeds~$2$. We are in position to complete the proof of 
Theorems~\ref{complmain} and \ref{complmainrep}. 

\medskip  

\noindent {\it Proof of Theorems~\ref{complmain} and \ref{complmainrep}.}

Let $b \ge 2$ be an integer and $\xi$ an irrational real number. 
Write $\xi$ in base $b$ as in (1.1) and put 
${\mathbf a} = a_1 a_2 \ldots$. 
Lemma \ref{minmurep} asserts that
$$
\rep({\mathbf a}) \ge \frac{\mu(\xi)}{\mu(\xi) - 1}.
$$
Combined with Theorem \ref{diffReprep}, this gives 
$$
\Rep({\mathbf a})
\ge 1 + \frac{(\rep({\mathbf a}))^3}{1 + \rep({\mathbf a}) + (\rep({\mathbf a}))^2} 
\ge  1 + \frac{\mu^3}{ 3 \mu^3 - 6 \mu^2 + 4 \mu - 1},
$$ 
where $\mu$ denotes the irrationality exponent of $\xi$. 
As well, we obtain
$$
\liminf_{n \to + \infty} \, \frac{p(n, {\mathbf a})}{n} \ge 
\rep({\mathbf a})  - 1 + \frac{1}{(\rep({\mathbf a}))^3}     
\ge \frac{\mu^4 - 3 \mu^3 + 6 \mu^2 - 4 \mu +1 }{\mu^3 (\mu - 1)}.
$$
We have established (1.3) and (2.1) and thereby completed the proofs of 
Theorems~\ref{complmain} and \ref{complmainrep}. 

\bigskip
\goodbreak

\noindent {\it Additional comments.}

We can slightly improve Theorem~\ref{diffReprep} (and, consequently, 
Theorems~\ref{complmain} and \ref{complmainrep}) by means of a 
refined case-by-case analysis. 
With the notation used in the proof of Theorem~\ref{diffReprep}, 
the two cases to distinguish are:

(i) $j_1 = m_2$ and $j_2 = m_1 + n - v_n$ (that is, $m_2 < m_1 + n - v_n$);

(ii) $j_1 = m_1 + n - v_n$ and $j_2 = m_2$ (that is, $m_2 > m_1 + n - v_n$). 

\noindent
Then, (3.1) can be replaced by the stronger inequality which holds for Case (i)  
$$
\Rep({\mathbf x}) \ge \rep({\mathbf x}) + \frac{1}{\rep({\mathbf x}) + (\rep({\mathbf x}))^2} 
\eqno (3.9)
$$
and (2.1) by
$$
\limsup_{n \to + \infty} \, \frac{r(n, \xi, b)}{n}  
\ge  2 + \frac{2 \mu^2 + \mu - 1 - \mu^3}{\mu (\mu - 1) (2 \mu - 1)}.      
$$ 
Furthermore, we may also see that, under a slightly weaker assumption than (3.9), 
Case (i) cannot occur 
for two consecutive integers $n$ such that $r(n+1, {\bf x}) \ge r(n, {\bf x}) + 2$. 
Hence, a further small improvement can be obtained.  

\medskip
\section*{Acknowledgement}
Dong Han Kim was supported by the National Research Foundation of Korea (NRF-2015R1A2A2A01007090). 

\goodbreak

\end{document}